\newcommand{\R}{\mathbb{R}}
\newcommand{\al}{\alpha}
\newcommand{\g}{\gamma}
\newcommand{\eps}{\varepsilon}
\newcommand{\w}{\wedge}
\newcommand{\la}{\langle}
\newcommand{\ra}{\rangle}
\renewcommand{\prod}[2]{\la\,#1 , #2\,\ra}
\newcommand{\ip}[2]{\ensuremath{\langle #1,#2\rangle}}
\newtheorem{prop}{Proposition}
\newtheorem{thm}{Theorem}
\begin{document}

\author{Luis Guijarro}
\address{ Department of Mathematics, Universidad Aut\'onoma de Madrid, and ICMAT CSIC-UAM-UCM-UC3M}
\curraddr{}
\email{luis.guijarro@uam.es}
\thanks{The first author was supported by research grants MTM2008-02676-MCINN and MTM2011-22612}

\author{Gerard Walschap}
\address{Department of Mathematics, University of Oklahoma.}
\curraddr{}
\email{gerard@ou.edu}
\thanks{}
\title{The curvature operator at the soul}

\subjclass{53C20} 

\begin{abstract} We prove two splitting theorems, one topological, the other metric, for open manifolds with nonnegative sectional curvature.
\end{abstract}

\maketitle

	In this note, we study the behavior of the curvature operator $\rho:\Lambda^2(TM)\to\Lambda^2(TM)$ along the soul of an open manifold $M$ with nonnegative sectional curvature $K$. We adopt the convention that the curvature tensor $R$ is given by $\ip{R(x,y)y}x=K(x,y)$ for orthonormal vectors $x$ and $y$. Thus, our curvature tensor agrees with that of \cite{CE}, but differs from \cite{DC} as to sign. Recall that the curvature operator is the self-adjoint endomorphism given on decomposable elements by 
	\[\prod{\rho(x\w y)}{z\w w}=\prod{R(x,y)w}{z}, \qquad x,y,z,w\in M_p,\quad p \in M.
	\] 
Given a positive integer $n$, the curvature operator is said to be \emph{$n$-nonnegative} (resp.~\emph{$n$-positive}) on $U\subset M$ if the sum of any $n$ eigenvalues	 of $\rho$ is nonnegative (resp.~positive) at every point of $U$. 1-nonnegative is synonymous with nonnegative. 

Compact manifolds with 2-positive  and those with 2-nonnegative curvature operator were  classified in \cite{BW} and \cite{NW} respectively. Open (i.e., complete, noncompact) spaces $M$ with nonnegative curvature operator also have nonnegative sectional curvature, and therefore admit a soul $\Sigma$ in the sense of Cheeger and Gromoll. In \cite{No}, it was shown that the universal cover $\tilde M$ splits as an isometric product of $\tilde\Sigma$ with a manifold diffeomorphic to Euclidean space. Here, we generalize this result to spaces with 3-nonnegative curvature operator. Of course, one must also assume nonnegative sectional curvature in this case, since otherwise there is no soul. On the other hand, the argument only requires the condition regarding the curvature operator to hold along the soul.

As yet another indication that the structure of these spaces is determined at the soul, we show that if the scalar curvature of $M$ is small enough and $\Sigma$ is simply connected, then $M$ is diffeomorphic to a product of $\Sigma$ with Euclidean space.

We would like to thank the referee for pointing out a mistake in a preliminary proof of Theorem 1, and Lorenzo Sadun for referring us to \cite{Uh}.

\section{Curvature relations}\label{relations}

We begin by collecting a few facts about the curvature tensor $R$; these are not essentially new, but proofs are included for convenience of the reader.
	First of all, observe that in spaces of nonnegative curvature, compact or not,
	$
	R(x,y)y=0
	$
	whenever $K(x,y)=0$. To see this, consider the function
	 \[
	t\mapsto f(t)=\prod{R(x,y+tu)(y+tu)}{x}.\]
	Expanding this formula for $f$  and using $f'(0)=0$ then yields the result. 
	
In what follows, we restrict ourselves to the curvature at the soul $\Sigma$ of an open manifold with $K\ge0$. Recall from \cite{CG} that any plane spanned by a vector tangent to $\Sigma$ and one orthogonal to 	it has zero curvature. 

	Given $p\in \Sigma$, let $x$, $y\in\Sigma_p$, and $u$, $v\perp\Sigma_p$. Then
		\begin{equation}\label{tag1.1}
	R(x,y)u = 2R(x,u)y, \qquad R(u,v)x=2R(u,x)v.	\end{equation}
	Indeed, $R(x+y,u)(x+y)=0$ since the plane spanned by $x+y$ and $u$ is flat. Expanding this expression and using again $R(x,u)x=R(y,u)y=0$ yields $R(y,u)x=-R(x,u)y$. Substituting in the Bianchi identity $R(x,y)u+R(y,u)x+R(u,x)y=0$ then implies the first assertion. The proof of the second one is similar.
	
	The other relation we will need is the following:
\begin{equation}\label{tag1.3}
	\prod{R(x,y)y}{x}\prod{R(u,v)v}{u}\geq \frac{9}{4}\prod{R(x,y)u}{v}^2.    
	\end{equation}

 For the proof,  let $\al,\beta,\g,\delta\in\R$,
$e=\al x+\beta u$, $f=\g y+\delta v$, and expand $\prod{R(e,f)f}{e}$ to obtain 
\[\begin{split}
\prod{R(e,f)f}{e}&=(\al\g)^2\prod{R(x,y)y}{x}+3\al\beta\g\delta\prod{R(x,y)v}{u}\\
&\quad+(\beta\delta)^2\prod{R(u,v)v}{u}.
\end{split}
\]
The right side may be written as $Q(\mathbf{x})$, with $\mathbf{x}=(\al\g,\beta\delta)\in\R^2$, and $Q$ a quadratic form on the plane. But $Q$ is nonnegative definite, which is exactly \eqref{tag1.3}.

\section{A metric splitting theorem}
The main result of this section will use the following fact from linear algebra.
\begin{prop}
Let $\rho:E\to E$ denote a self-adjoint endomorphism of an $n$-dimensional inner product space $E$. Then $\rho$ is $k$-nonnegative if and only if
\[\sum_{i=1}^k\ip{\rho v_i}{v_i}\ge 0
\]
 for any $k$ orthonormal vectors $v_i\in E$.
\end{prop}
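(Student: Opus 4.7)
The plan is to combine the spectral theorem with Ky Fan's minimum principle from linear algebra. Diagonalize $\rho$ in an orthonormal eigenbasis $e_1,\dots,e_n$ with eigenvalues $\lambda_1\le\cdots\le\lambda_n$ listed in increasing order. The implication ``sum condition $\Rightarrow$ $k$-nonnegativity'' is immediate: given any $k$ indices $j_1<\cdots<j_k$, substituting $v_i=e_{j_i}$ into the hypothesis yields $\sum_i\lambda_{j_i}\ge 0$, which is exactly $k$-nonnegativity.

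For the nontrivial converse, I would assume $\rho$ is $k$-nonnegative, so that in particular $\lambda_1+\cdots+\lambda_k\ge 0$ (the smallest possible sum of $k$ eigenvalues). Given any orthonormal $v_1,\dots,v_k$, the goal is the sharper Ky Fan inequality
\[\sum_{i=1}^k\ip{\rho v_i}{v_i}\ \ge\ \lambda_1+\cdots+\lambda_k.\]
The key observation is that the left-hand side depends only on the subspace $V:=\spann(v_1,\dots,v_k)$ and not on the particular orthonormal basis of $V$, because it equals the trace of the self-adjoint compression $\pi_V\rho\pi_V|_V$, where $\pi_V$ denotes orthogonal projection onto $V$. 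One may therefore replace $v_1,\dots,v_k$ by an eigenbasis of this compression, reducing the task to showing that its $i$-th smallest eigenvalue is at least $\lambda_i$ for each $i=1,\dots,k$.

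This remaining interlacing step follows directly from the Courant-Fischer min-max characterization: $\lambda_i=\min_{\dim W=i}\max_{v\in W,\,\|v\|=1}\ip{\rho v}{v}$, and the corresponding formula for the compression minimizes the same quantity but over $i$-dimensional subspaces of $V$ only. Restricting the class of admissible subspaces can only enlarge the minimum, so the desired bound holds. Summing the $k$ inequalities gives the Ky Fan estimate and completes the proof.

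The main obstacle is really this last interlacing step, but it is entirely standard linear algebra. The crucial conceptual move is the trace-independence observation, which converts a statement about orthonormal $k$-tuples into one about $k$-dimensional subspaces, where Courant-Fischer applies directly.
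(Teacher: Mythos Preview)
Your argument is correct; both directions are handled soundly, and your appeal to Courant--Fischer for the Cauchy interlacing step is legitimate since for $v\in V$ one has $\ip{\pi_V\rho\pi_V v}{v}=\ip{\rho v}{v}$, so the compression's min--max runs over a subfamily of the subspaces used for $\rho$ itself. The route, however, differs from the paper's. The paper establishes the same Ky~Fan inequality $\sum_i\ip{\rho v_i}{v_i}\ge\lambda_1+\cdots+\lambda_k$ by an entirely elementary coordinate computation: writing $v_i=\sum_j\alpha_{ij}e_j$, it manipulates the double sum $\sum_{i,j}\lambda_j\alpha_{ij}^2$ directly, the crucial ingredient being the bound $\sum_{i=1}^k\alpha_{ij}^2\le 1$ (obtained by extending the $v_i$ to a full orthonormal basis). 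Your approach is more conceptual and identifies the result as a known principle, at the cost of invoking Courant--Fischer as a black box; the paper's approach is self-contained and needs no named theorems, at the cost of a somewhat opaque chain of inequalities. Both reach the same sharp lower bound, so neither is strictly stronger.
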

\begin{proof} Let $\lambda_1\le\dots\le\lambda_n$ denote the eigenvalues of $\rho$, and $\{e_i\}$  a corresponding orthonormal basis of eigenvectors. The only if part is clear since
\[\sum_{i=1}^k\lambda_i=\sum_{i=1}^k\ip{\rho e_i}{e_i}\ge 0.
\]
For the converse, let $v_1,\dots,v_k\in E$ be orthonormal, and write 
\[v_i=\al_{i1}e_1+\dots+\al_{in}e_n, \qquad \al_{ij}\in\R.
\]
We then have
\[
\begin{split}
\sum_{i=1}^k\ip{\rho v_i}{v_i} & = \sum_{i=1}^k\left( \sum_{j=1}^n\lambda_j\al_{ij}^2\right) = \sum_{j=1}^{k-1}\left( \lambda_j\sum_{i=1}^k\al_{ij}^2\right) +\sum_{j=k}^{n}\left( \lambda_j\sum_{i=1}^k\al_{ij}^2\right)\\
&\ge \sum_{j=1}^{k-1}\left( \lambda_j\sum_{i=1}^k\al_{ij}^2\right) + \lambda_k\sum_{j=k}^n\sum_{i=1}^k\al_{ij}^2.
\end{split}
\]
Rewriting the last summation as
\[\sum_{j=k}^n\sum_{i=1}^k\al_{ij}^2=\sum_{j=1}^n\sum_{i=1}^k\al_{ij}^2-\sum_{j=1}^{k-1}\sum_{i=1}^k\al_{ij}^2 = k  -\sum_{j=1}^{k-1}\sum_{i=1}^k\al_{ij}^2,
\]
we conclude that
\begin{equation}\label{Eq:1}
\begin{split}
\sum_{i=1}^k\ip{\rho v_i}{v_i} & \ge \sum_{j=1}^{k-1}\left( \lambda_j \sum_{i=1}^k\al_{ij}^2\right) + \lambda_k\left( k  -\sum_{j=1}^{k-1}\sum_{i=1}^k\al_{ij}^2\right)\\
&=k\lambda_k +\sum_{j=1}^{k-1}\left(( \lambda_j -\lambda_k)\sum_{i=1}^k\al_{ij}^2\right) .
\end{split}
\end{equation}
Next, observe that $\sum_{i=1}^k\al_{ij}^2\le1$ for any $j$. Indeed, extending $v_1,\dots,v_k$ to an orthonormal basis $v_1,\dots, v_n$ of $E$, we have
\[1=\ip{e_j}{e_j}=\sum_{i=1}^n\ip{e_j}{v_i}^2\ge \sum_{i=1}^k\ip{e_j}{v_i}^2=\sum_{i=1}^k\al_{ij}^2.
\]
Substituting this inequality in \eqref{Eq:1} yields
\[\sum_{i=1}^k\ip{\rho v_i}{v_i} \ge k\lambda_k+\sum_{j=1}^k(\lambda_j-\lambda_k) = \lambda_1+\dots+\lambda_k
\]
which is nonnegative by assumption. This establishes the claim.
\end{proof}
\begin{thm} Let $M$ be an open manifold with nonnegative sectional curvature. If the curvature operator of $M$ is 3-nonnegative when restricted to a soul $\Sigma$, then $M$ splits locally isometrically over $\Sigma$.
\end{thm}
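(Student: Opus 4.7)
\medskip\noindent\textbf{Proof plan.} The plan is to use the preceding Proposition to show that 3-nonnegativity of $\rho$ forces the pointwise vanishing along $\Sig$ of the ``mixed'' curvature component $A:=\ip{R(x,y)u}{v}$ (for $x,y\in T_p\Sig$ and $u,v\perp T_p\Sig$), and then to extract a local isometric splitting by a Jacobi-field rigidity argument along normal geodesics.

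Fix $p\in\Sig$ and orthonormal $x,y\in T_p\Sig$, $u,v\perp T_p\Sig$; the degenerate cases $\dim\Sig\leq 1$ or $\mathrm{codim}\,\Sig\leq 1$ are immediate because $A$ is antisymmetric in each pair. From \eqref{tag1.1} combined with the skew-adjointness of $R(x,u)$ one extracts $\ip{R(x,u)v}{y}=-A/2$ and, analogously, $\ip{R(x,v)u}{y}=A/2$; also $K(x,u)=K(y,u)=K(x,v)=K(y,v)=0$ by \cite{CG}. Now consider the three orthonormal 2-forms
\[
v_1=\tfrac{1}{\sqrt{2}}(\xu+\yv),\quad v_2=\tfrac{1}{\sqrt{2}}(\xv-\yu),\quad v_3=\tfrac{1}{\sqrt{2}}(\xu-\yv).
\]
A direct expansion yields $\ip{\rho v_i}{v_i}=-A/2,\,-A/2,\,+A/2$ respectively, so the Proposition forces $\sum_{i=1}^{3}\ip{\rho v_i}{v_i}=-A/2\geq 0$, i.e., $A\leq 0$. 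Swapping $u\leftrightarrow v$ in the construction reverses the sign of $A$ and gives $A\geq 0$, so $A=0$. By multilinearity this holds for all $x,y\in T_p\Sig$ and $u,v\perp T_p\Sig$; combined with \eqref{tag1.1} and the fact that $R(x,y)u\perp T_p\Sig$ (which follows from total geodesicity of $\Sig$, \cite{CG}), one concludes $R(x,y)u=R(u,v)x=R(x,u)v=0$ at every $p\in\Sig$, i.e., all mixed components of the curvature tensor vanish along the soul.

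With all mixed curvatures vanishing on $\Sig$, the normal bundle has flat normal connection and parallel transport along any normal geodesic $\g(t)=\exp_p(tu)$ preserves the splitting $T_pM=T_p\Sig\op (T_p\Sig)^\perp$. A standard Jacobi-field rigidity argument in nonneg sectional curvature (as in \cite{CG} and the authors' earlier work) then shows that the normal exponential map is a local isometry from the product metric near the zero section, giving the claimed local isometric splitting of $M$ over $\Sig$. I expect the principal obstacle to be this last step: the curvature identities are known only at points of $\Sig$, so propagating them along normal directions to realize the splitting on a full tubular neighborhood is where the finer soul-theoretic rigidity input becomes indispensable.
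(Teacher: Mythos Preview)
Your bivector test is essentially the paper's own argument: the three orthonormal $2$-forms you use (with $v_3=(x\wedge u-y\wedge v)/\sqrt2$ in place of the paper's $(x\wedge v+y\wedge u)/\sqrt2$) produce the same sum $-A/2$, and your direct ``$A\le 0$ and $A\ge 0$'' conclusion is just the contrapositive of the paper's contradiction with $\alpha=|R(x,y)u|>0$. So the heart of the proof---showing that $3$-nonnegativity of $\rho$ forces $R^\nabla\equiv 0$ on $\Sigma$---matches the paper exactly.

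The genuine gap is precisely where you flag it. The paper does \emph{not} attempt a Jacobi-field argument; once the normal bundle of $\Sigma$ is flat, it simply invokes Strake's splitting theorem \cite{St}, which says that an open nonnegatively curved manifold whose soul has flat normal bundle is locally isometric to a product over the soul. Your sketched alternative is not quite right as stated: the assertion that ``parallel transport along any normal geodesic $\gamma(t)=\exp_p(tu)$ preserves the splitting $T_p\Sigma\oplus(T_p\Sigma)^\perp$'' does not follow from the vanishing of mixed curvatures \emph{at} $\Sigma$, since those identities say nothing about $R$ at points $\gamma(t)$ with $t>0$. Propagating rigidity outward requires the full soul machinery (Sharafutdinov retraction, Perelman-type flat-strip arguments, etc.), which is exactly the content of \cite{St}. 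Replace your last paragraph with that citation and the proof is complete.
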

\begin{proof}
Denote by $n$ and $k$ the dimension and codimension of the soul, respectively. Suppose the normal bundle of $\Sigma$ is not flat, so that there exist $p\in \Sigma$, $x,y\in\Sigma_p$, $u\perp\Sigma_p$ such that $R(x,y)u\ne0$. If $v$ is a unit vector in direction $R(x,y)u$, and $\alpha=|R(x,y)u|>0$, then
\[\ip{R(x,y)u}v=\alpha>0.
\]
If $\rho$ denotes the curvature operator of $M$, then by \eqref{tag1.1},
\[\begin{split}
\ip{\rho(x\wedge u+ y\wedge v)}{x\wedge u + y\wedge v}&=2\ip{\rho(x\wedge u)}{y\wedge v}= 2\ip{R(x,u)v}y\\
&=-2\ip{R(x,u)y}v=-\ip{R(x,y)u}v\\
&=-\alpha,
\end{split}\]
and similarly,
\[
\ip{\rho(x\wedge v- y\wedge u)}{x\wedge v - y\wedge u}=-\alpha.
\]
Furthermore,
\[\ip{\rho(x\wedge v+y\wedge u)}{x\wedge v+y\wedge u}=\alpha.
\]
Notice that the three bivectors above are mutually orthogonal of length $\sqrt{2}$. Normalizing them, we obtain three orthonormal bivectors $\xi_1,\xi_2,\xi_3$ satisfying 
\[\sum_{i=1}^3\ip{\rho(\xi_i)}{\xi_i}=\frac12(-\alpha-\alpha+\alpha)<0,\]
which contradicts the Proposition in the case $k=3$.

Summarizing, if $\rho$ is 3-nonnegative along the soul, then the normal bundle of the soul must be flat. The main result in \cite{St} now implies that $M$ is locally a metric product with $\Sigma$ as one of the factors. 
\end{proof}
It is worth noting that it is essential for the curvature operator condition to hold along the soul. One can construct examples where the curvature operator is nonnegative everywhere outside a compact set, but where $M$ does not split:
consider the Hopf action of $S^1$ on $S^3$, and let $g$ denote a nonnegatively curved rotationally symmetric metric on $\R^2$ that is isometric to $S^1\times[1,\infty)$ outside a compact set. Construct the quotient $M=S^3\times_{S^1}\R^2$. Since the diagonal action of $S^1$ on the metric product $S^3\times(\R^2,g)$ is by isometries, we may endow $M$ with the unique metric for which the projection  $S^3\times(\R^2,g)\to M$ is a Riemannian submersion. $M$ is then a nonnegatively curved manifold with soul $S=S^3\times_{S^1}\{0\}$. Away from a tubular neighbourhood of $S$, $M$ is isometric to 
$(S^3\times_{S^1}S^1)\times [1,\infty)$ and on that set with nonnegative sectional curvature, the curvature operator is also nonnegatively curved: indeed, $S^3\times_{S^1}S^1$ has dimension 3, so that every bivector is decomposable. $M$ is, however, the total space of a nontrivial bundle over $S^2$ (its Euler class is not trivial), so that its curvature operator cannot be nonnegative everywhere. 

\section{A differentiable splitting theorem}

\begin{thm}
Let $M$ be an open nonnegatively curved manifold with simply connected soul $\Sigma$, and choose some $r>(\dim \Sigma)/2$. There exists some $\eps_0=\eps_0(\Sigma,r)$ such that if 
the scalar curvature $s_M$ of $M$ along $\Sigma$ satisfies
$$
\left(\int_{\Sigma}\,s_M^r\,dv\right)^{1/r} < \eps_0 
$$
then $M$ is diffeomorphic to $\Sigma\times\R^k$.
\end{thm}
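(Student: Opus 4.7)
The plan is to identify $M$ with the total space of the normal bundle $\pi\colon\nu(\Sigma)\to\Sigma$ (a standard consequence of \cite{CG}) and then to prove that this vector bundle is trivial. Thus everything reduces to trivializing the $O(k)$-bundle of orthonormal normal frames over the compact simply connected base $\Sigma$.

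I would then bound the curvature $R^\perp$ of the normal connection $\n^\perp$ on $\nu(\Sigma)$ in terms of $s_M|_\Sigma$. Because $\Sigma$ is totally geodesic, the Codazzi equation shows $R(x,y)z\in T\Sigma$ for $x,y,z\in T\Sigma$; dualizing, $R(x,y)u$ has no tangential component when $u\perp T\Sigma$, and the Ricci equation collapses to $R^\perp(x,y)u=R(x,y)u$. Inequality \eqref{tag1.3} then gives
\[
\ip{R(x,y)u}{v}^{2}\le\tfrac{4}{9}K(x,y)K(u,v)
\]
for orthonormal tangent $x,y$ and orthonormal normal $u,v$. Summing over frames and using that mixed planes have zero sectional curvature (so that $s_M|_\Sigma=s_\Sigma+s_\perp$ with both summands nonnegative), AM--GM yields the pointwise estimate $|R^\perp|\le\tfrac{1}{3}s_M$ on $\Sigma$, and hence
\[
\|R^\perp\|_{L^{r}(\Sigma)}\le\tfrac{1}{3}\|s_M\|_{L^{r}(\Sigma)}<\tfrac{1}{3}\eps_{0}.
\]

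The final step is to invoke Uhlenbeck's theorem \cite{Uh}: for $r>(\dim\Sigma)/2$ and sufficiently small $L^{r}$ curvature, the connection $\n^\perp$ admits a Coulomb gauge of controlled $W^{1,r}$ size on each ball of a fixed finite covering of $\Sigma$. Because $\Sigma$ is simply connected and compact, these local gauges can be patched by continuing along paths, and the Sobolev embedding $W^{1,r}\hookrightarrow C^{0}$ (valid precisely when $r>(\dim\Sigma)/2$) guarantees that the transition cocycles are continuous and close enough to the identity to trivialize the bundle. Choosing $\eps_{0}=\eps_{0}(\Sigma,r)$ below the threshold of Uhlenbeck's theorem---a constant depending on the chosen covering of $\Sigma$ and on the Sobolev constants controlled by $(\Sigma,r)$---then yields $\nu(\Sigma)\cong\Sigma\times\R^{k}$, and so $M\cong\Sigma\times\R^{k}$.

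The main obstacle is this final patching step: a small $L^{r}$ bound on the curvature of $\n^\perp$ does not, by itself, kill the characteristic classes of an arbitrary bundle, and both the simple connectivity of $\Sigma$ and the sharp Sobolev threshold $r>(\dim\Sigma)/2$ are essential to promote Uhlenbeck's local Coulomb gauge into a global trivialization of $\nu(\Sigma)$.
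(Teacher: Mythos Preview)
Your proposal is correct and follows the same strategy as the paper: bound $\|R^{\perp}\|_{L^{r}(\Sigma)}$ by $\|s_{M}\|_{L^{r}(\Sigma)}$ via \eqref{tag1.3} together with the flatness of mixed planes, then invoke Uhlenbeck \cite{Uh} to force the normal bundle to be flat, hence trivial over the simply connected $\Sigma$. The only cosmetic difference is that the paper packages the final step as a contradiction---a sequence of connections with $\|R^{\nabla_{i}}\|_{r}\to 0$ is, after gauge transformations, convergent to a flat connection---whereas you argue directly by patching local Coulomb gauges, which has the virtue of making explicit why both $\pi_{1}(\Sigma)=0$ and the Sobolev threshold $r>(\dim\Sigma)/2$ are needed.
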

\begin{proof}
We begin with the determinant inequality (\ref{tag1.3}). For any $p\in \Sigma$, take the trace of this inequality over  an orthonormal basis $\{x_i\}$of $\Sigma_p$ and $\{v_j\}$ of its orthogonal complement to obtain
\begin{equation}
\sum_{i,\dots,l}\ip{R(x_i,x_j)u_k}{u_l}^2 \leq \frac{4}{9}\sum_{i,j}K(x_i,x_j)\cdot\sum_{k.l}K(u_k,u_l).
\end{equation} 
Since planes spanned by one vector tangent, and another vector orthogonal to $\Sigma$ are flat, we obtain 
\begin{equation}
|R^\nabla|^2\leq c\cdot {s_M}^2.
\end{equation}
Here $R^\nabla$ denotes the curvature tensor of the normal bundle of $\Sigma$ (which is just the restriction of the curvature tensor of $M$ since $\Sigma$ is totally geodesic). The term $c$ is due to the constants appearing in the definition of the scalar curvature, and may, without affecting the remainder of the argument, be assumed to equal 1; raising to the $r/2$-th power and integrating over $\Sigma$ yields
\begin{equation}
\|R^\nabla\|_r=\left(\int\,|R^\nabla|^r\right)^{1/r} \leq
\left(\int\,|s_M|^r\right)^{1/r}=\|s_M\|_r.
\end{equation}

Assume now the statement is false; then $M$ is not diffeomorphic to the total space of a flat bundle, but there exists a sequence $\{g_i\}$ of metrics  over it having nonnegative curvature and scalar curvature with soulwise norm 
$\|s^{(i)}_M\|_r <1/i$. Thus, the corresponding curvature tensors on the normal bundle satisfy $\|R^{\nabla_i}\|_r< 1/i$, and a result of Uhlenbeck
\cite{Uh} implies the existence of a sequence of gauge transformations $g_i$ on the normal bundle $E$ of the soul such that 
${g_i}^{-1}\circ\nabla_i\circ g_i$ converges to the flat connection on $E$, contradicting the assumption that the bundle is not flat.\end{proof}


\begin{thebibliography}{10}

\bibitem{BW} C. B\"ohm, B. Wilking, \emph{Manifolds with positive curvature operators are space forms}, Ann. of Math. (2) \textbf{167} (2008), 1079--1097.

\bibitem{CE} J. Cheeger, D. Ebin, \emph{Comparison Theorems in Riemannian Geometry}, North Holland (1975).

\bibitem{CG} J. Cheeger, D. Gromoll, \emph{On the structure of complete manifolds of nonnegative curvature}, Ann. of Math. (2) \textbf{96} (1972), 413--443.

\bibitem{DC} M. Do Carmo, \emph{Riemannian Geometry}, Birkh\"auser (1992).

\bibitem{NW} L. Ni, B. Wu, \emph{Complete manifolds with nonnegative curvature operator}, Proc. Amer. Math. Soc. \textbf{135} (2007), no. 9, 3021--3028.

\bibitem{No} M. H. Noronha, \emph{A splitting theorem for complete manifolds with nonnegative curvature operator}, Proc. Amer. Math. Soc. \textbf{105} (1989), no. 4, 979--985.

\bibitem{St} M. Strake, \emph{A splitting theorem for nonnegatively curved manifolds}, Manuscripta Math. \textbf{61} (1988), 315--325.

\bibitem{Uh} K. Uhlenbeck, \emph{Connections with $L^p$ bounds on curvature}, Comm. Math. Phys. \textbf{83} (1982), 31--42.
\end{thebibliography}
\end{document}